\theoremstyle{plain}
\newtheorem{theorem}{Theorem}[section]
\newtheorem{lemma}[theorem]{Lemma}
\newtheorem{proposition}[theorem]{Proposition}
\theoremstyle{remark}
\newcommand{\reel}{\mathbb{R}}
\newcommand{\ent}{\mathbb{Z}}
\newcommand{\abs}[1]{\left\vert #1\right\vert }
\newcommand{\bg}{\medskip\goodbreak}
\newcommand{\vers}{{\,\longrightarrow\,}}
\newcommand{\egdef}{\buildrel{\scriptscriptstyle{\rm def}}\over{=}}
\newcommand{\ba}{\operatorname{\textbf{a}}}
\newcommand{\bb}{\operatorname{\textbf{b}}}
\begin{document}
\title[A Reverse Hilbert-like Optimal Inequality]
{A Reverse Hilbert-like Optimal Inequality}
\author[Omran Kouba]{Omran Kouba$^\dag$}
\address{Department of Mathematics \\
Higher Institute for Applied Sciences and Technology\\
P.O. Box 31983, Damascus, Syria.}
\email{\href{mailto:omran_kouba@hiast.edu.sy}{omran\_kouba@hiast.edu.sy}}
\keywords{inequalities, Fourier series, Fourier transform.}
\subjclass[2010]{42A16, 42A38, 42B20.}
\thanks{$^\dag$ Department of Mathematics, Higher Institute for Applied Sciences and Technology.}

\begin{abstract}
We prove an  inequality on positive real numbers, that looks like a reverse to the well-known Hilbert inequality, and we use some unusual techniques from Fourier analysis to prove that this inequality is optimal.\par
\end{abstract}

\maketitle
\section{\sc Introduction and Notation}\label{sec1}

This research was initiated by a proposed problem to the American Mathematical Monthly \cite{dal}, where it was asked to prove that
\[
\left(\sum_{k=1}^n\frac{a_j }{b_j}\right)^2-2
\left(\sum_{j,k=1}^n\frac{a_ja_k}{(b_j+b_j)^2}\right)^2\leq
2\left( \sum_{j,k=1}^n\frac{a_ja_k}{(b_j+b_k)} 
 \sum_{l,m=1}^n\frac{a_la_m}{(b_l+b_m)^3}\right)^{1/2}
\]
for positive real numbers $a_1,\ldots,a_n$ and $b_1,\ldots,b_n$. Our aim is not to prove or to discuss this inequality, but to notice that its form suggests the possibility of a typographic error in the denominator of the second term on  left,  should it be $(b_j+b_k)^2$ instead of 
$(b_j+b_j)^2$ ?  
In this note we show that the rectified version of this inequality does not hold, but rather another one with a larger
constant on the right side, and we will show this constant is the best possible. So, let us fix some notation and describe this work.
\bg  
 For a positive integer $n$ and two vectors
$\ba=(a_1,\ldots,a_n)$ and $\bb=(b_1,\ldots,b_n)$ of positive real numbers we consider the quantities
\begin{align}
T_{\ba,\bb}&=\sum_{k=1}^n\frac{a_k}{b_k},\label{E:T}\\
\noalign{\noindent\text{and}}
S_{\ba,\bb}^{(m)}&=\sum_{k=1}^n\sum_{l=1}^n\frac{a_ka_l}{(b_k+b_l)^m}
\quad \text{for $m=1,2,3$.}\label{E:S}
\end{align}

In Proposition \ref{pr1} we  prove that, for every 
positive integer $n$ and every vectors
$\ba=(a_1,\ldots,a_n)$ and $\bb=(b_1,\ldots,b_n)$ of positive real numbers, we have
\begin{equation}\label{E:Ineq}
\left(T_{\ba,\bb}\right)^2\leq 2S_{\ba,\bb}^{(2)}+2\sqrt{2}\sqrt{S_{\ba,\bb}^{(1)}
S_{\ba,\bb}^{(3)}}
\end{equation}

The difficulty does not reside in the proof
of \eqref{E:Ineq} but, in fact, it resides in showing that it is optimal in the sense that $2\sqrt{2}$ is the best possible constant. Precisely, we will prove in Theorem \ref{th2} that if for every 
positive integer $n$ and every vectors
$\ba=(a_1,\ldots,a_n)$ and $\bb=(b_1,\ldots,b_n)$ of positive real numbers, we have
$
\left(T_{\ba,\bb}\right)^2\leq 2S_{\ba,\bb}^{(2)}+\lambda\sqrt{S_{\ba,\bb}^{(1)}
S_{\ba,\bb}^{(3)}}
$
then $\lambda\geq 2\sqrt{2}$. 

This appears a difficult task, and requires tools from approximation theory and Fourier analysis. Indeed,
we will prove in Proposition \ref{pr2} that, for every $h>0$ there
exists two families of positive numbers $(a_j(h))_{j\in\ent}$ and
$(b_j(h))_{j\in\ent}$ such that
\begin{equation*}
\forall\,t\geq 0,\quad\abs{\frac{1}{(1+t)^2}-\sum_{j\in\ent}a_j(h)e^{-b_j(h)t}}\leq\frac{\delta(h)}{(1+t)^2}
\end{equation*}
with $\lim\limits_{h\to 0^+}\delta(h)=0$,
and this will be exploited in proving the announced optimality result.

\bg

\section{\sc The Main Results}\label{sec2}
In the next proposition, we give a proof of \eqref{E:Ineq}. 
\bg

\begin{proposition}\label{pr1}
For every 
positive integer $n$ and every vectors
$\ba=(a_1,\ldots,a_n)$ and $\bb=(b_1,\ldots,b_n)$ of positive real numbers, we have
\begin{equation*}
\left(T_{\ba,\bb}\right)^2\leq 2S_{\ba,\bb}^{(2)}+2\sqrt{2}\sqrt{S_{\ba,\bb}^{(1)}
S_{\ba,\bb}^{(3)}}.
\end{equation*}
\end{proposition}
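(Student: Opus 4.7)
The plan is to pass from sums to integrals by invoking the Laplace-transform identities $b^{-m} = \frac{1}{(m-1)!}\int_0^\infty t^{m-1}e^{-bt}\,dt$ for $m=1,2,3$ applied to $b_k+b_l$. Introducing the completely monotone function
\[
F(t) \egdef \sum_{k=1}^n a_k e^{-b_k t}, \qquad t\geq 0,
\]
a routine exchange of summation and integration converts the four quantities of interest into
\[
T_{\ba,\bb} = \int_0^\infty F(t)\,dt,\quad S_{\ba,\bb}^{(1)} = \int_0^\infty F(t)^2\,dt,\quad S_{\ba,\bb}^{(2)} = \int_0^\infty tF(t)^2\,dt,\quad S_{\ba,\bb}^{(3)} = \frac{1}{2}\int_0^\infty t^2F(t)^2\,dt.
\]
Thus the inequality \eqref{E:Ineq} is equivalent to the functional inequality
\[
\left(\int_0^\infty F\,dt\right)^2 \leq 2\int_0^\infty tF^2\,dt + 2\sqrt{\int_0^\infty F^2\,dt \cdot \int_0^\infty t^2 F^2\,dt},
\]
and in this form it no longer remembers the special structure of $F$; only the positivity and integrability of $F$ matter.

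Next, I would apply the Cauchy--Schwarz inequality with the one-parameter family of weights $(t+c)^2$, $c>0$. The key virtue of this weight is that $\int_0^\infty (t+c)^{-2}\,dt = 1/c$ is finite, so
\[
\left(\int_0^\infty F\,dt\right)^2 \leq \frac{1}{c}\int_0^\infty (t+c)^2 F(t)^2\,dt
= c\int_0^\infty F^2\,dt + 2\int_0^\infty tF^2\,dt + \frac{1}{c}\int_0^\infty t^2F^2\,dt.
\]
Notice how the expansion of $(t+c)^2$ produces precisely the three integrals corresponding to $S^{(1)}, S^{(2)}, S^{(3)}$, with the coefficient $2$ in front of $\int tF^2\,dt$ landing exactly where $2S^{(2)}$ should appear.

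Finally, I would optimize the free parameter $c$. The expression $cA + B/c$ with $A=\int F^2$ and $B=\int t^2F^2$ attains its minimum $2\sqrt{AB}$ at $c=\sqrt{B/A}$ by AM--GM (both integrals are strictly positive whenever $F\not\equiv 0$, which we may of course assume). Substituting this optimal value reproduces exactly
\[
\left(\int_0^\infty F\,dt\right)^2 \leq 2\int_0^\infty tF^2\,dt + 2\sqrt{\int_0^\infty F^2\,dt\cdot \int_0^\infty t^2F^2\,dt},
\]
which is the desired inequality.

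The only genuinely creative step is selecting the weight $(t+c)^2$ for the Cauchy--Schwarz inequality; however this is well-motivated, since one wants a weight whose reciprocal is integrable on $[0,\infty)$ and whose expansion generates a combination of $1$, $t$, and $t^2$. All remaining manipulations are routine, which is consistent with the author's remark that the real difficulty lies not in proving \eqref{E:Ineq} but in establishing the optimality of the constant $2\sqrt{2}$.
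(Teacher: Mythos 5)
Your proof is correct and essentially identical to the paper's: both rewrite $T_{\ba,\bb}$ and the $S^{(m)}_{\ba,\bb}$ as integrals involving $F(t)=\sum_k a_k e^{-b_k t}$, apply Cauchy--Schwarz against the integrable weight $(t+c)^{-2}$, and then optimize a free parameter to produce the $2\sqrt{2}\sqrt{S^{(1)}S^{(3)}}$ term. The only cosmetic difference is that you carry the parameter $c$ inside the weight from the start, whereas the paper fixes $c=1$ and recovers the same one-parameter family afterwards by applying its inequality to the rescaled vectors $(\lambda\ba,\lambda\bb)$ and exploiting the homogeneity of the quantities involved.
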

\begin{proof}
Consider the function $f:\reel^+\vers\reel^+$, defined by \[f(t)=\sum_{j=1}^na_je^{-b_j t}\]
 Using the Cauchy-Schwarz inequality, we have
 \begin{align}\label{E:pr1}
 \left(\int_0^\infty f(t)dt\right)^2&=\left(\int_0^\infty \frac{1}{1+t}(1+t)f(t)dt\right)^2\notag\\
 &\leq\left(\int_0^\infty\frac{dt}{(1+t)^2}\right)\left(\int_0^\infty f^2(t)dt+2
  \int_0^\infty tf^2(t)dt+ \int_0^\infty t^2f^2(t)dt\right)\notag\\
 &=2\int_0^\infty tf^2(t)dt+\int_0^\infty f^2(t)dt+ \int_0^\infty t^2f^2(t)dt.
 \end{align}

Noting that $\int_0^\infty t^{m}e^{-bt}dt=\frac{m!}{b^{m+1}}$ for $m=0,1,2$, we obtain
\[
\int_0^\infty f(t)\,dt=T_{\ba,\bb},\qquad
\int_0^\infty t^{m}f^2(t)\,dt=m!\,S_{\ba,\bb}^{(m)},
\]
and the \eqref{E:pr1} becomes
\begin{equation}\label{E:pr2}
\left(T_{\ba,\bb}\right)^2\leq S_{\ba,\bb}^{(1)}+2S_{\ba,\bb}^{(2)}
+ 2S_{\ba,\bb}^{(3)}.
\end{equation}

Applying \eqref{E:pr2} to $\lambda\ba=(\lambda a_1,\ldots,\lambda a_n)$ and $\lambda\bb=(\lambda b_1,\ldots,\lambda b_n)$ for some
$\lambda>0$, we obtain
\begin{equation*}\label{E:pr3}
\left(T_{\ba,\bb}\right)^2\leq \lambda S_{\ba,\bb}^{(1)}+2S_{\ba,\bb}^{(2)}
+\frac{2}{\lambda}S_{\ba,\bb}^{(3)}
\end{equation*}
and the desired inequality follows  by 
choosing $\lambda=\sqrt{2S_{\ba,\bb}^{(3)}/S_{\ba,\bb}^{(1)}}$.
\end{proof}

\bg
Analyzing the preceding proof, we see that in order to prove the optimality of \eqref{E:Ineq}, and to have equality we need
the function $t\mapsto f(t)$ to be proportional  to
$t\mapsto 1/(1+t)^2$, but this is impossible since the first has an exponential decay at $+\infty$. This remark holds the idea of what we will do next!. We will look for ``almost'' equality by approximating $t\mapsto 1/(1+t)^2$ by a linear combination of 
decreasing exponentials with positive coefficients. The next Proposition \ref{pr2} provides us with the desired conclusion. But before we proceed, we will need the next two technical lemmas.
\bg
\begin{lemma}\label{lm0}
The necessary and sufficient condition, on the positive parameter $\lambda$, for the following inequality to hold, for $x\in\reel$, 
\begin{equation*}
\frac{\pi x(1+x^2)}{\sinh(\pi x)}\leq \frac{1}{\cosh^2(\lambda x)}
\end{equation*}
is that $\lambda\leq\lambda_0\egdef\sqrt{\frac{\pi^2}{6}-1}
\approx 0.803078$.
\end{lemma}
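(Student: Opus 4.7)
The plan is to separate necessity from sufficiency. Both sides are even in $x$ and equal $1$ at $x=0$, so the issue reduces to the Taylor behaviour near the origin together with a global comparison, with the extremal value of $\lambda$ emerging from the $x^2$-coefficients.

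For necessity, I would Taylor-expand both sides around $x=0$. Using $\frac{\pi x}{\sinh(\pi x)}=1-\frac{\pi^2 x^2}{6}+O(x^4)$ gives
\[
\frac{\pi x(1+x^2)}{\sinh(\pi x)}=1+\bigl(1-\tfrac{\pi^2}{6}\bigr)x^2+O(x^4),
\]
while $1/\cosh^2(\lambda x)=1-\lambda^2 x^2+O(x^4)$. Matching $x^2$-coefficients in the required inequality forces $1-\pi^2/6\leq -\lambda^2$, i.e.\ $\lambda^2\leq \pi^2/6-1=\lambda_0^2$. This proves the necessity of $\lambda\leq \lambda_0$.

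For sufficiency, since $\lambda\mapsto 1/\cosh^2(\lambda x)$ is non-increasing in $\lambda\geq 0$, it suffices to establish the inequality at $\lambda=\lambda_0$. I would then invoke the Weierstrass products
\[
\frac{\sinh(\pi x)}{\pi x}=\prod_{k=1}^\infty\!\left(1+\frac{x^2}{k^2}\right),\qquad \cosh(\lambda x)=\prod_{k=0}^\infty\!\left(1+\frac{4\lambda^2 x^2}{\pi^2(2k+1)^2}\right),
\]
cancel the factor $1+x^2$ coming from $k=1$ on the left, take logarithms, and recast the target as $h(x)\geq 0$, where
\[
h(x)=\sum_{k=2}^\infty\log\!\left(1+\frac{x^2}{k^2}\right)-2\sum_{k=0}^\infty\log\!\left(1+\frac{4\lambda_0^2 x^2}{\pi^2(2k+1)^2}\right).
\]
The defining identity $\sum_{k\geq 2}\tfrac{1}{k^2}=\lambda_0^2=\sum_{k\geq 0}\tfrac{8\lambda_0^2}{\pi^2(2k+1)^2}$ makes the $x^2$-coefficient of $h$ vanish, so $h(x)=O(x^4)$; a short computation using $\zeta(4)=\pi^4/90$ and $\sum_{k\geq 0}(2k+1)^{-4}=\pi^4/96$ shows the quartic coefficient is strictly positive, giving $h\geq 0$ near the origin.

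The main obstacle is upgrading this to $h(x)\geq 0$ for \emph{all} $x$. My route is to differentiate and apply the Mittag--Leffler expansions $\pi\coth(\pi x)=\tfrac{1}{x}+\sum_{k\geq 1}\tfrac{2x}{x^2+k^2}$ and $\lambda\tanh(\lambda x)=\sum_{k\geq 0}\tfrac{8\lambda^2 x}{\pi^2(2k+1)^2+4\lambda^2 x^2}$ to obtain
\[
h'(x)=\sum_{k=2}^\infty\frac{2x}{x^2+k^2}-2\lambda_0\tanh(\lambda_0 x),
\]
and then show $h'(x)\geq 0$ for $x\geq 0$, which combined with $h(0)=0$ yields the conclusion. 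One immediately has $h'(0)=0$ and $h'(x)\to \pi-2\lambda_0>0$ as $x\to\infty$; the hard work is the intermediate regime. A clean way to handle it is via the further integral form $h(x)=\int_0^{x^2}\phi(s)\,ds$ where
\[
\phi(s)=\sum_{k=2}^\infty\frac{1}{k^2+s}-\sum_{k=0}^\infty\frac{8\lambda_0^2}{\pi^2(2k+1)^2+4\lambda_0^2 s}
\]
satisfies $\phi(0)=0$, $\phi'(0)>0$ (from the $\zeta(4)$ and $\pi^4/96$ computation) and $\phi(s)\sim (\pi/2-\lambda_0)/\sqrt{s}$ as $s\to\infty$. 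Showing $\phi\geq 0$ on $[0,\infty)$, either via a convexity argument or by a term-pairing in the two series that exploits the structural equality defining $\lambda_0$, then closes the proof.
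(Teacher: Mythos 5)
Your necessity argument is exactly the paper's: both proofs extract the condition $1+\lambda^2\le\pi^2/6$ from the $x^2$-coefficients at the origin, and your reduction of sufficiency to the single value $\lambda=\lambda_0$ by monotonicity of $\lambda\mapsto\cosh(\lambda x)$ is fine. Your setup for sufficiency is also sound as far as it goes: after clearing denominators the claim is $(1+x^2)\cosh^2(\lambda_0x)\le\sinh(\pi x)/(\pi x)$, and your product expansions correctly recast this as $h(x)=\int_0^{x^2}\phi(s)\,ds\ge0$ with $\phi(0)=0$ coming from the defining identity $\sum_{k\ge2}k^{-2}=\lambda_0^2$.

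However, there is a genuine gap: the entire difficulty of the sufficiency direction is the \emph{global} positivity, and your proposal stops exactly there. You verify $\phi(0)=0$, $\phi'(0)>0$ and the asymptotics $\phi(s)\sim(\pi/2-\lambda_0)/\sqrt{s}$, and then say that $\phi\ge0$ on all of $[0,\infty)$ should follow ``either via a convexity argument or by a term-pairing'' --- but neither is carried out, and neither is routine. (Note also that $\phi\ge0$, i.e.\ $h'\ge0$, is strictly \emph{stronger} than the needed $h\ge0$; local positivity at $0$ plus positivity at $\infty$ does not by itself exclude a dip of $\phi$, or of $h$, into negative values at intermediate $s$, which is precisely the ``intermediate regime'' you flag as the hard part.) In closed form your claim amounts to $\frac{\pi\coth(\pi\sqrt{s})}{2\sqrt{s}}-\frac{1}{2s}-\frac{1}{1+s}\ge\frac{\lambda_0\tanh(\lambda_0\sqrt{s})}{\sqrt{s}}$ for all $s>0$, which is of essentially the same depth as the original inequality. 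For comparison, the paper avoids this impasse by an elementary but complete device: it shows that \emph{every} Taylor coefficient of $f(x)=\frac{\sinh(\pi x)}{\pi x}-(1+x^2)\cosh^2(\lambda_0x)$ from $x^4$ on is nonnegative, by writing the $n$-th coefficient as $(1-a_n)\frac{\pi^{2n}}{(2n+1)!}$ and proving $a_n\le a_2<1$ via monotonicity of the explicit ratio $a_{n+1}/a_n$. Your route could in principle be completed, but as written the proof is not finished; either supply a full argument for $\phi\ge0$ (or directly for $h\ge0$), or switch to a coefficientwise comparison as in the paper.
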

\begin{proof}
Suppose that the proposed inequality is satisfied for some $\lambda>0$
then we must have
\[\frac{(1+x^2)\cosh^2(x)-1}{x^2}\leq \frac{1}{x^2}\left(\frac{\sinh(\pi x)}{\pi x}-1\right)\]
for every nonzero $x$. Letting $x$ tend to $0$ we obtain $1+\lambda^2\leq \pi^2/6$.\bg
Conversely, let $\lambda_0=\sqrt{\frac{\pi^2}{6}-1}$, and consider the function
\[
f(x)=\frac{\sinh(\pi x)}{\pi x}-(1+x^2)\cosh^2(\lambda_0x)=
\frac{\sinh(\pi x)}{\pi x}-\frac{1}{2}(1+x^2)(1+\cosh(2\lambda_0x)).
\] 

The power series expansion of $f$ is given by
\[
f(x)=\sum_{n=2}^\infty (1-a_n)\frac{(\pi x)^{2n}}{(2n+1)!}
\] 
where
\[
a_n=(2n+1)\left(\frac{2\lambda_0}{\pi}\right)^{2n-2}
\left(\frac{1}{3}+\frac{ 2n^2-n-2}{\pi^2} \right)
\]
Now,
\[
\frac{a_{n+1}}{a_{n}}=
\left(\frac{2}{3}-\frac{4}{\pi^2}\right)\left(1+
\frac{2}{2n+1}\right)
\left(1+\frac{12n+3}{6n^2-3n+\pi^2-6}
\right)
\]
From this, it is straightforward to see that the sequence $\left(\frac{a_{n+1}}{a_{n}}\right)_{n\geq 2}$ is decreasing, and that $\frac{a_{3}}{a_{2}}\approx 0.8177<1$. Thus,
$a_n\leq a_2\approx 0.96531<1$ for every $n\geq 2$. This proves that $f(x)\geq 0$ for every real number $x$, and the proposed inequality follows for $\lambda\in[0,\lambda_0]$.
\end{proof}

\begin{lemma}\label{lm1}
For $t\geq 0$, let $f_t:\reel\to\reel$ be the function defined by
\begin{equation*}
f_t(x)= e^{2x-(1+t)e^{x}}.
\end{equation*}
Then the Fourier transform $\widehat{f_t}=\int_{\reel}f_t(x)e^{ix(\cdot)}\,dx $ of $f_t$ satisfies
\begin{equation*}
\abs{\widehat{f_t}(w)}=\frac{1}{(1+t)^2}\,\sqrt{\frac{\pi w(1+w^2)}{\sinh(\pi w)}}
\leq \frac{1}{(1+t)^2}\cdot \frac{1}{\cosh(\lambda_0 w)}.
\end{equation*}
\end{lemma}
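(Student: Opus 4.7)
The plan is to compute $\widehat{f_t}(w)$ explicitly in terms of the Gamma function, and then extract its modulus via the reflection formula, which will produce exactly the expression $\frac{1}{(1+t)^2}\sqrt{\pi w(1+w^2)/\sinh(\pi w)}$; once the equality is established, the stated upper bound is just the content of Lemma \ref{lm0} applied at $x=w$ and $\lambda=\lambda_0$.

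To carry this out, I would begin from the definition
\[
\widehat{f_t}(w)=\int_{\reel} e^{2x-(1+t)e^{x}}e^{iwx}\,dx
\]
and perform the change of variables $u=(1+t)e^{x}$, so that $dx=du/u$, $e^{x}=u/(1+t)$, and $e^{iwx}=(u/(1+t))^{iw}$. This transforms the integral into
\[
\widehat{f_t}(w)=\frac{1}{(1+t)^{2+iw}}\int_0^\infty u^{1+iw}e^{-u}\,du=\frac{\Gamma(2+iw)}{(1+t)^{2+iw}},
\]
so in particular $|\widehat{f_t}(w)|=|\Gamma(2+iw)|/(1+t)^2$.

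Next I would compute $|\Gamma(2+iw)|^2$. Writing $\Gamma(2+iw)=(1+iw)\Gamma(1+iw)$ and using $|\Gamma(1+iw)|^2=\Gamma(1+iw)\Gamma(1-iw)$, the standard reflection identity $\Gamma(z)\Gamma(1-z)=\pi/\sin(\pi z)$ together with $\Gamma(1+iw)=iw\,\Gamma(iw)$ gives
\[
|\Gamma(1+iw)|^2=iw\,\Gamma(iw)\Gamma(1-iw)=\frac{iw\,\pi}{\sin(\pi iw)}=\frac{\pi w}{\sinh(\pi w)}.
\]
Multiplying by $|1+iw|^2=1+w^2$ yields the claimed equality
\[
|\widehat{f_t}(w)|=\frac{1}{(1+t)^2}\sqrt{\frac{\pi w(1+w^2)}{\sinh(\pi w)}}.
\]

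Finally, the inequality $|\widehat{f_t}(w)|\leq (1+t)^{-2}/\cosh(\lambda_0 w)$ is obtained by taking square roots in the estimate of Lemma \ref{lm0} with $\lambda=\lambda_0$, applied at $x=w$. There is no real obstacle here: the only step that requires attention is keeping track of the branches and signs when unwinding the reflection formula on the imaginary axis, but since $|\Gamma(1+iw)|^2$ is manifestly real and positive, the computation closes consistently and the bound follows immediately from the previous lemma.
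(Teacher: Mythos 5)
Your proposal is correct and follows essentially the same route as the paper: explicit computation of the transform via the substitution $u=(1+t)e^{x}$, reduction to $\Gamma(2\pm iw)$, the reflection formula to get $\pi w(1+w^2)/\sinh(\pi w)$, and then Lemma \ref{lm0}. The only cosmetic difference is the sign convention in the exponential (you use $e^{+iwx}$, matching the statement, while the paper's proof uses $e^{-iwx}$), which does not affect the modulus.
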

\begin{proof}
Indeed we have
\begin{align*}
\widehat{f_t}(w)&=\int_{-\infty}^\infty f_t(x)e^{-iwx}\,dx\\
&=\int_{-\infty}^\infty e^{(2-iw)x} e^{-(1+t)e^{x}} \,dx,\qquad\text{setting $s\leftarrow (1+t)e^{x}$,}\\
&=\frac{1}{(1+t)^{2-iw}}\int_{0}^\infty s^{1-iw} e^{-s} \,ds
=\frac{\Gamma(2-iw)}{(1+t)^{2-iw}},
\end{align*}
where $\Gamma$ is the well-known Eulerian Gamma function \cite{wei}.
Therefore
\begin{align*}
\abs{\widehat{f_t}(w)}^2&=\frac{\Gamma(2-iw)}{(1+t)^{2-iw}}\cdot\frac{\Gamma(2+iw)}{(1+t)^{2+iw}}\\
&=\frac{1}{(1+t)^4}(1-iw)(1+iw)iw\Gamma(1-iw)\Gamma(iw)\\
&=\frac{1}{(1+t)^4}\cdot(1+w^2)\cdot\frac{i\pi w}{\sin(i\pi w)}\\
&=\frac{1}{(1+t)^4}\cdot(1+w^2)\cdot\frac{\pi w}{\sinh(\pi w)}.
\end{align*}
Here we used Euler's reflection formula for the
Gamma function: $\Gamma(z)\Gamma(1-z)=\frac{\pi z}{\sin(\pi z)}$, (see \cite[Chapter 6, formula 6.1.17]{abr}). Finally
\begin{equation}
\abs{\widehat{f_t}(w)}=\frac{1}{(1+t)^2}\,\sqrt{\frac{\pi w(1+w^2)}{\sinh(\pi w)}},
\end{equation}
and the proposed inequality follows from Lemma \ref{lm0}.
\end{proof}

\bg
In the next proposition we prove the announced approximation result. In fact, the approach consists of approximating the function
$t\mapsto \frac{1}{(1+t)^2}$, written as an integral, with
of a positive function of exponential type, using the
trapezoidal quadrature rule, making use of
Poisson's formula to yield a good control on the committed error. For more details on this approach, we refer the reader to
 \cite{Wal} and the references therein.  The details of the proof are provided for the convenience of the reader.
\bg

\begin{proposition}\label{pr2}
For  $h>0$ and $n\in\ent$, let
\begin{equation*}
a_n(h)=h \exp\left(2nh-e^{nh}\right),\qquad b_n(h)=e^{nh},
\end{equation*}
then
\begin{equation*}
\forall\,t\geq 0,\quad\abs{\frac{1}{(1+t)^2}-\sum_{n\in\ent}a_n(h)e^{-b_n(h)t}}\leq\frac{\delta(h)}{(1+t)^2}
\end{equation*}
with
\begin{equation*}
\delta(h)=\frac{4}{\exp\left(\frac{2\pi\lambda_0}{h}\right)-1},
\end{equation*}
where $\lambda_0$ was defined in Lemma \ref{lm0}.
\end{proposition}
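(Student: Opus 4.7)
The plan is to recognize $\sum_{n\in\ent} a_n(h) e^{-b_n(h)t}$ as the trapezoidal-rule approximation of $\int_{\reel} f_t(x)\,dx$, where $f_t$ is exactly the function from Lemma \ref{lm1}. The identity $a_n(h) e^{-b_n(h)t} = h e^{2nh-(1+t)e^{nh}} = h\,f_t(nh)$ is immediate from the definitions, so the sum equals $h\sum_{n\in\ent} f_t(nh)$. On the other hand, the substitution $s=(1+t)e^x$ gives
\[
\int_{\reel} f_t(x)\,dx=\int_0^\infty \frac{s^2}{(1+t)^2}e^{-s}\,\frac{ds}{s}=\frac{1}{(1+t)^2},
\]
so our task reduces to bounding the trapezoidal error $h\sum_n f_t(nh)-\int_{\reel} f_t$.

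The next step is the Poisson summation formula. Since $f_t$ is smooth, positive, decays doubly exponentially at $+\infty$ and exponentially at $-\infty$, and (by Lemma \ref{lm1}) $\widehat{f_t}$ decays exponentially on $\reel$, Poisson applies without issue and yields
\[
h\sum_{n\in\ent} f_t(nh)=\sum_{k\in\ent}\widehat{f_t}\!\left(\tfrac{2\pi k}{h}\right).
\]
The $k=0$ term is $\widehat{f_t}(0)=\int_{\reel} f_t=\frac{1}{(1+t)^2}$, so isolating it gives
\[
\sum_{n\in\ent} a_n(h)e^{-b_n(h)t}-\frac{1}{(1+t)^2}=\sum_{k\neq 0}\widehat{f_t}\!\left(\tfrac{2\pi k}{h}\right).
\]

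It remains to insert the pointwise bound from Lemma \ref{lm1}. By the triangle inequality and symmetry,
\[
\left|\sum_{k\neq 0}\widehat{f_t}\!\left(\tfrac{2\pi k}{h}\right)\right|\leq \frac{1}{(1+t)^2}\sum_{k\neq 0}\frac{1}{\cosh(2\pi\lambda_0 k/h)}=\frac{2}{(1+t)^2}\sum_{k=1}^\infty\frac{1}{\cosh(2\pi\lambda_0 k/h)}.
\]
Using the elementary estimate $1/\cosh(y)\leq 2e^{-y}$ for $y\geq 0$, the last sum is dominated by the geometric series $2\sum_{k\geq 1}e^{-2\pi\lambda_0 k/h}=2/(e^{2\pi\lambda_0/h}-1)$, producing exactly the stated $\delta(h)=4/(e^{2\pi\lambda_0/h}-1)$.

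The real work — the delicate inequality $|\widehat{f_t}(w)|\leq (1+t)^{-2}/\cosh(\lambda_0 w)$, which makes the Poisson tail small enough — has already been carried out in Lemmas \ref{lm0} and \ref{lm1}; the only remaining task is the identification of the correct quadrature rule and a routine verification that Poisson summation is legitimate here, neither of which presents any serious difficulty.
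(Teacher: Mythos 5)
Your proposal is correct and follows essentially the same route as the paper: the paper's argument is precisely Poisson summation applied to $f_t$ at step $h$ (carried out explicitly by periodizing $f_t$ into $F_t$, identifying its Fourier coefficients with $\frac{1}{h}\widehat{f_t}(2\pi m/h)$, and evaluating the normally convergent Fourier series at $x=0$), followed by the same tail estimate via Lemma \ref{lm1} and $1/\cosh(y)\leq 2e^{-y}$. The only difference is that you invoke Poisson summation as a known result where the paper proves it in place; the sufficient decay conditions you cite do justify that.
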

\begin{proof}
Noting that, for $t\geq 0$ we have 
\begin{equation}\label{E:pr21}
\frac{1}{(1+t)^2}=\int_0^\infty u e^{-(1+t)u}\,du=\int_{-\infty}^\infty
f_t(x)\,dx
\end{equation}
where $f_t$ is the positive function defined in Lemma \ref{lm0}.
The function $f_t$ is super-exponentially decreasing for positive $x$ and exponentially decreasing for negative $x$. A simple upper bound for $f_t$ is obtained as follows,  for $x\geq 0$ we have
\begin{equation}\label{E:pr23}
2x-(1+t)e^x\leq 2x-e^x\leq 2e^{x-1}-e^x=(2-e)e^{x-1}\leq (2-e) x
\end{equation}
since $x\leq e^{x-1}$ for every real $x$. And, for $x<0$, we have
\begin{equation}\label{E:pr24}
2x-(1+t)e^x< 2x<(e-2) x
\end{equation}
Combining \eqref{E:pr23} and \eqref{E:pr24} we see that $f_t(x)\leq e^{(2-e)\abs{x}}$, for $x\in \reel$.  

This simple upper bound shows that the series
$
\sum_{n\in\ent}f_t(\cdot+n h)
$
is uniformly convergent on every compact subset of $\reel$. Therefore,
we define an $h$-periodic \textit{continuous} function $F_t$ by the formula
\begin{equation}\label{E:pr25}
F_t(x)=\sum_{n\in\ent}f_t(x+n h).
\end{equation}
Moreover, the exponential Fourier coefficients $(C_m(F_t))_{m\in\ent}$ of $F_t$
are given by
\begin{align}
C_m(F_t)&=\frac{1}{h}\int_0^hF_t(x)e^{-2i\pi m x/h}dx\notag\\
&=\frac{1}{h}\sum_{n\in\ent}\int_0^hf_t(x+n h)e^{-2i\pi m x/h}dx\notag\\
&=\frac{1}{h}\sum_{n\in\ent}\int_{nh}^{(n+1)h}f_t(x)e^{-2i\pi m x/h}dx\notag\\
&=\frac{1}{h}\int_{-\infty}^{\infty}f_t(x)e^{-2i\pi m x/h}dx=\frac{1}{h} \widehat{f_t}\left(\frac{2\pi m}{h}\right)
\end{align}
where $\widehat{f_t}$ is the Fourier transform of $f_t$. In particular, according to Lemma \ref{lm1},
the Fourier series of $F_t$ is normally convergent, and consequently it is equal to $F_t$. Taking the value at $x=0$ we get
\begin{equation}
h\sum_{n\in\ent}f_t(n h)=\sum_{m\in\ent}\widehat{f_t}
\left(\frac{2\pi m}{h}\right)
\end{equation}
Using \eqref{E:pr21} and Lemma \ref{lm1} we get
\begin{align*}
\abs{\frac{1}{(1+t)^2}-h\sum_{n\in\ent}f_t(n h)}&\leq 2\sum_{m=1}^\infty\abs{\widehat{f_t}
\left(\frac{2\pi m}{h}\right)}\\
&\leq \frac{2}{(1+t)^2} \sum_{m=1}^\infty\frac{1}{\cosh(2\pi\lambda_0 m/h)}\\
&\leq \frac{4}{(1+t)^2}\sum_{m=1}^\infty\exp\left(-\frac{2\pi\lambda_0m}{h}\right)=\frac{\delta(h)}{(1+t)^2},
\end{align*}
and the proposition follows.
\end{proof}
\bg
Now, we have what we need to prove the next result.
\bg
\begin{theorem}\label{th2}
Consider a positive real constant $\lambda$ such that, for every 
positive integer $n$ and every vectors
$\ba=(a_1,\ldots,a_n)$ and $\bb=(b_1,\ldots,b_n)$ of positive real numbers, we have
\begin{equation}\label{E:th20}
\left(T_{\ba,\bb}\right)^2\leq 2S_{\ba,\bb}^{(2)}+\lambda \sqrt{S_{\ba,\bb}^{(1)}
S_{\ba,\bb}^{(3)}}
\end{equation}
Then $\lambda\geq 2 \sqrt{2}$.
\end{theorem}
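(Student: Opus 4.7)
The plan is to saturate the Cauchy--Schwarz step in the proof of Proposition \ref{pr1}: equality there would require the test function $f(t)=\sum_k a_k e^{-b_k t}$ to be proportional to $\phi(t):=(1+t)^{-2}$. This $\phi$ is not a finite exponential sum, but Proposition \ref{pr2} supplies a uniform \emph{relative} approximation by such sums. The strategy is to insert the truncations of those approximations into the hypothesized inequality \eqref{E:th20} and pass to the limit.

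Concretely, fix $h>0$ and $N\in\nat$, and apply \eqref{E:th20} to the truncated vectors $\ba^{(N,h)}=(a_n(h))_{|n|\leq N}$ and $\bb^{(N,h)}=(b_n(h))_{|n|\leq N}$ of Proposition \ref{pr2}. The identities $T_{\ba,\bb}=\int_0^\infty f\,dt$ and $S^{(m)}_{\ba,\bb}=\frac{1}{(m-1)!}\int_0^\infty t^{m-1}f(t)^2\,dt$ (with $f=\sum_{|n|\leq N}a_n(h)e^{-b_n(h)t}$) are already recorded inside the proof of Proposition \ref{pr1}. Since every summand is positive, monotone convergence in $N\to\infty$ yields
\[
T_h:=\int_0^\infty f_h(t)\,dt,\qquad S_h^{(m)}:=\frac{1}{(m-1)!}\int_0^\infty t^{m-1}f_h(t)^2\,dt,
\]
where $f_h(t)=\sum_{n\in\ent}a_n(h)e^{-b_n(h)t}$. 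The envelope $f_h\leq(1+\delta(h))\phi$ from Proposition \ref{pr2} guarantees finiteness, the delicate case being $m=3$, which exploits the (marginal) integrability of $t^2(1+t)^{-4}$ at infinity.

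Next I would let $h\to 0^+$. The two-sided bound $|f_h-\phi|\leq\delta(h)\phi$ implies $|f_h^2-\phi^2|\leq(2\delta(h)+\delta(h)^2)\phi^2$; since $\delta(h)\to 0$ and $t^{m-1}\phi^2$ is integrable on $\reel^+$ for $m=1,2,3$, dominated convergence (with dominator a fixed multiple of $t^{m-1}\phi^2$) produces
\[
T_h\to 1,\qquad S_h^{(1)}\to\tfrac{1}{3},\qquad S_h^{(2)}\to\tfrac{1}{6},\qquad S_h^{(3)}\to\tfrac{1}{6},
\]
after computing $\int_0^\infty t^{m-1}(1+t)^{-4}\,dt$ for $m=1,2,3$. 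The hypothesized inequality then passes to $1\leq \tfrac{1}{3}+\lambda/(3\sqrt{2})$, which rearranges to the desired $\lambda\geq 2\sqrt{2}$.

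The principal obstacle will be the handling of $S_h^{(3)}$: because $t^2(1+t)^{-4}$ is only borderline integrable at infinity, nothing weaker than the pointwise \emph{relative} bound of Proposition \ref{pr2}, which envelopes $f_h$ by a fixed multiple of $\phi$ itself, would keep $S_h^{(3)}$ bounded along the double limit. A merely uniform additive approximation of $\phi$ by exponential sums would be useless here, which is precisely why the refined Poisson--summation estimate of Proposition \ref{pr2} was needed.
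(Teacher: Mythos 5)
Your proof is correct and follows essentially the same route as the paper: insert the truncated exponential-sum approximants of Proposition \ref{pr2} into \eqref{E:th20}, use the two-sided \emph{relative} error bound to control $T_{\ba,\bb}$ from below and the $S^{(m)}_{\ba,\bb}$ from above, and let $h\to 0^+$ to arrive at $1\leq \tfrac{1}{3}+\tfrac{\lambda}{3\sqrt{2}}$. The only differences are cosmetic: the paper fixes a finite truncation $\nu$ with $T_{\ba,\bb}\geq 1-2\delta(h)$ instead of your monotone limit in $N$, and your remark that $t^2(1+t)^{-4}$ is ``borderline'' integrable at infinity overstates matters (it decays like $t^{-2}$), although your underlying point that only the relative, enveloping bound keeps $S^{(3)}$ under control is sound.
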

\begin{proof}
Consider $h>0$ and let the families $(a_n(h))_{n\in\ent}$ and $(b_n(h))_{n\in\ent}$ be defined as in Proposition \ref{pr2}. Accordingly we have
\[
\frac{1-\delta(h)}{(1+t)^2}\leq \sum_{n\in\ent}a_n(h)e^{-b_n(h)t}\leq \frac{1+\delta(h)}{(1+t)^2}
\]
we conclude that
\begin{equation}\label{E:th21}
1-\delta(h)\leq \int_0^\infty\left(\sum_{n\in\ent}a_n(h)e^{-b_n(h)t}\right)dt
=\sum_{n\in\ent}\frac{a_n(h)}{b_n(h)}
\end{equation}
and, for $m=0,1,2$,
\begin{equation}
\int_0^\infty t^m\left(\sum_{n\in\ent}a_n(h)e^{-b_n(h)t}\right)^2 dt
\leq(1+\delta(h))^2\int_0^\infty\frac{t^m}{(1+t)^4}dt
\end{equation}
This yields
\begin{align}
\sum_{(k,l)\in\ent^2}\frac{a_k(h)a_l(h)}{b_k(h)+b_l(h) } &\leq \frac{(1+\delta(h))^2}{3}\label{E:th230}\\
\sum_{(k,l)\in\ent^2}\frac{a_k(h)a_l(h)}{
(b_k(h)+b_l(h))^2} &\leq \frac{(1+\delta(h))^2}{6}\label{E:th231}\\
\sum_{(k,l)\in\ent^2}\frac{a_k(h)a_l(h)}{
(b_k(h)+b_l(h))^3} &\leq \frac{(1+\delta(h))^2}{6}\label{E:th232}
\end{align}
Now, according to \eqref{E:th21} there is a positive integer $\nu$ such that
\begin{equation}\label{E:th24}
1-2\delta(h)\leq \sum_{n=-\nu}^\nu\frac{a_n(h)}{b_n(h)}
\end{equation}
Taking $n=2\nu+1$,
$
\ba=\left(a_n(h)\right)_{-\nu\leq n\leq \nu},$ and
$\bb=\left(b_n(h)\right)_{-\nu\leq n\leq \nu}$,
we obtain using \eqref{E:th230}--\eqref{E:th24}:
 \begin{equation*}
 1-2\delta(h)\leq T_{\ba,\bb},
 \end{equation*}

\begin{equation*}
S_{\ba,\bb}^{(1)}\leq \frac{(1+\delta(h))^2}{3},\quad
S_{\ba,\bb}^{(2)}\leq \frac{(1+\delta(h))^2}{6},\quad
S_{\ba,\bb}^{(3)}\leq \frac{(1+\delta(h))^2}{6}
\end{equation*}
and from \eqref{E:th20} we conclude that
\[
(1-2\delta(h))^2\leq
 \frac{(1+\delta(h))^2}{3}\left(1+\frac{\lambda}{\sqrt{2}}\right).
 \]
 Letting $h$ tend to $0$ and recalling that
 $\lim_{h\to 0}\delta(h)=0$ we obtain $\lambda\geq 2\sqrt{2}$.
\end{proof}

\end{document}